\documentclass[11pt, reqno]{amsart}

\usepackage{amssymb,latexsym,amsmath,amsfonts,amsthm}
\usepackage{mathrsfs}
\usepackage{enumitem}
\usepackage[usenames]{color}
\usepackage{hyperref}
\usepackage{comment}

\numberwithin{equation}{section}

\definecolor{DPurple}{rgb}{0.46,0.2,0.69}

\theoremstyle{definition}
\newtheorem{definition}{Definition}[section]
\newtheorem{question}[definition]{Question}

\theoremstyle{remark}
\newtheorem{remark}[definition]{Remark}

\theoremstyle{plain}
\newtheorem{theorem}[definition]{Theorem}
\newtheorem{result}[definition]{Result}
\newtheorem{lemma}[definition]{Lemma}

\newcommand{\eps}{\varepsilon}
\newcommand{\zt}{\zeta}
\newcommand{\zbar}{\overline{z}}

\newcommand{\cHess}{\mathfrak{H}_{\raisebox{-2pt}{$\scriptstyle {\mathbb{C}}$}}}
\newcommand{\dbar}{\overline\partial}
\newcommand{\dirder}{\boldsymbol{{\sf D}}}

\newcommand{\bdy}{\partial}
\newcommand{\OM}{\Omega}
\newcommand{\D}{\mathbb{D}}

\newcommand{\smoo}{\mathcal{C}}

\newcommand\leb[1]{\mathbb{L}^{{#1}}}

\newcommand{\bcdot}{\boldsymbol{\cdot}}
\newcommand{\lrarw}{\longrightarrow}

\newcommand{\gvar}{\mathcal{X}}
\newcommand{\germ}{\mathfrak{V}}
\newcommand{\multp}{\nu_{\raisebox{-3pt}{\!$\scriptstyle 0$}}}
\newcommand{\multexp}{\nu_{\raisebox{-2pt}{${\scriptscriptstyle 0}$}}}
\newcommand{\univ}{\boldsymbol{{\sf u}}}
\newcommand{\excep}{\mathscr{E}}

\newcommand{\Cn}{\mathbb{C}^n}

\newcommand{\C}{\mathbb{C}} 
\newcommand{\R}{\mathbb{R}}

\newcommand{\wt}{\widetilde}

\begin{document}

\title[Rarity of $\mathcal{C}^{1,1}$ solutions to the complex
Monge--Amp{\`e}re equation]{Rarity of $\boldsymbol{\mathcal{C}^{1,1}}$ solutions to the complex
Monge--Amp{\`e}re \\ equation on weakly pseudoconvex domains}

\author{Gautam Bharali}
\address{Department of Mathematics, Indian Institute of Science, Bangalore 560012, India}
\email{bharali@iisc.ac.in}

\author{Rumpa Masanta}
\address{Theoretical Statistics and Mathematics Unit, Indian Statistical Institute, Bangalore Centre,
Bangalore 560059, India}
\email{rumpa\_ra@isibang.ac.in}

\begin{abstract}
We show that on any weakly pseudoconvex $B$-regular domain, the classical Dirichlet problem for
the complex Monge--Amp{\`e}re equation with $\smoo^\infty$-smooth data does not in general admit 
$\smoo^{1,1}$-smooth solutions. This working draft is a prelude to potential-theoretic
solutions to some extension problems for mappings that were thought to rely on such
$\smoo^{1,1}$-smooth solutions.
\end{abstract}

\keywords{$B$-regular domain, complex Monge--Amp{\`e}re equation, reqular solutions}
\subjclass[2020]{Primary: 32T27, 32U05; Secondary: 32T40}

\maketitle

\vspace{-3.2mm}
\section{Introduction and statement of results}\label{S:intro}
A key part of this work is concerned with various aspects of the regularity of the solution
to the Dirichlet problem for the complex Monge--Amp{\`e}re equation. In its most classical form, the
problem is to solve the following: 
\begin{equation}\label{E:monge-ampere}
  \left.
  \begin{array}{r l}
  \underbrace{dd^cu\wedge\dots \wedge dd^cu}_\text{$n$ factors} =:
    (dd^c{u})^n &\mkern-9mu{= f\beta_n, \; 
    \text{ $u\in \smoo(\overline{\Omega})\cap {\sf psh}(\Omega)$},} \\
    u|_{\bdy\Omega} &\mkern-9mu{= \varphi,}
    \end{array} \right\}
\end{equation}
where $f\in \smoo(\overline\Omega)$ and is non-negative, and $\varphi\in \smoo(\bdy\Omega; \R)$.
Here, $d = (\bdy + \dbar)$ is the exterior derivative, $d^c := i(\bdy - \dbar)$, and
$\beta_n$ is defined as
\[
  \beta_n := (i/2)^n(dz_1\wedge d\zbar_1)\wedge\dots
  \wedge (dz_n\wedge d\zbar_n).
\]
We will not comment here on how, in general, the solution\,---\,assuming it exists\,---\,is
interpreted. We refer the reader to Section~\ref{S:ess} for a brief discussion on the technical
aspects of \eqref{E:monge-ampere}.
The significance of the left-hand side of
\eqref{E:monge-ampere}  
is that for any $\smoo^2$-smooth function $g$ on $\Omega$,
\begin{equation} \label{E:det}
  (dd^c{g})^n = A_n\det(\cHess{g})\beta_n, 
\end{equation}
where $A_n = 4^n n!$ and $\cHess{g}$ denotes the complex Hessian of $g$. For a part of this
working draft, we will examine the existence of better-than-$\smoo^1(\overline{\Omega})$ solutions
to the Dirichlet problem for the complex Monge--Amp{\`e}re problem.
\smallskip

Perhaps the most fundamental result on high-regularity solutions to the Dirichlet problem for the
complex Monge--Amp{\`e}re equation is that of Caffarelli--Kohn--Nirenberg--Spruck
\cite[Theorem~1.1]{caffarelliEtAl:DpnsoeeII85}, which states that if $\Omega$ is a bounded,
strongly pseudoconvex domain with $\smoo^\infty$-smooth boundary,
$f\in \smoo^\infty(\overline{\Omega})$, with $f> 0$, and $\varphi\in \smoo^\infty(\bdy\Omega; \R)$,
then \eqref{E:monge-ampere} (in fact, the above-cited result addresses a slightly more general
problem) has a unique strongly plurisubharmonic solution in
$\smoo^\infty(\overline{\Omega})$. One of reasons for the interest outside the realm of PDEs
in \cite[Theorem~1.1]{caffarelliEtAl:DpnsoeeII85} is its implications on the boundary
behaviour of proper holomorphic maps: see, for instance, \cite{krantz:espscdMAe00} and the
references therein. Such implications generated a lot of interest in the extension of
\cite[Theorem~1.1]{caffarelliEtAl:DpnsoeeII85} to weakly pseudoconvex domains in the 1990s, but
no categorical results.
\smallskip

Before further discussing weakly pseudoconvex domains, we introduce the notion of a \emph{$B$-regular
domain}. We refer the reader to Section~\ref{S:ess} for a definition of $B$-regularity and state
here its significance: on a $B$-regular domain $\Omega$, \eqref{E:monge-ampere} has
a unique solution in $\smoo(\overline{\Omega})\cap {\sf psh}(\Omega)$ for any $(\varphi, f)$ as specified
above. Our first few theorems will be on \emph{non-existence of high-regularity solutions}. These
will be stated for $B$-regular domains in order to rule out the non-existence of high-regularity 
solutions caused by the non-existence of $\smoo(\overline{\Omega})$ solutions. The extension of
\cite[Theorem~1.1]{caffarelliEtAl:DpnsoeeII85} to weakly pseudoconvex domains is a minor part of
this work and we discuss this only because it provides a context for a question that is one of
the foci of this working draft. We believe that it is known to experts that,
on a $\smoo^\infty$-smoothly bounded, weakly pseudoconvex $B$-regular domain, the unique solution to
\eqref{E:monge-ampere} is \textbf{not} in $\smoo^\infty(\overline{\Omega})$ for $\varphi\equiv 0$ and for
 $f\in \smoo^\infty(\overline{\Omega})$ with (as in
\cite[Theorem~1.1]{caffarelliEtAl:DpnsoeeII85}) $f>0$. Our first theorem vastly expands the
class of data $(\varphi, f)$ for which $\smoo^\infty(\overline{\Omega})$ solutions are forbidden and,
in fact, addresses $\smoo^2(\overline{\Omega})$ solutions.

\begin{theorem}\label{T:CMA_not-smooth}
Let $\Omega$ be a $B$-regular domain in $\Cn$, $n\geq 2$, with $\smoo^2$-smooth boundary
that is weakly pseudoconvex. Let $f\in \smoo^2(\overline{\Omega})$ and let $f> 0$.
Let $p\in \bdy\Omega$ and $\univ_p$ be a unit vector in
$H_p(\bdy\Omega)$ such that the Levi-form of $\bdy\Omega$ at $p$ is zero on
${\rm span}_{\C}\{\univ_p\}$. For any $\varphi: \bdy\Omega\lrarw \R$ given as
\[
  \varphi = -\Psi|_{\bdy\Omega},
\]
where $\Psi$ is plurisubharmonic in a neighbourhood of $\overline{\Omega}$, is
$\smoo^2$-smooth, and $\langle \univ_p, \cHess{\Psi}(p)^{\!{\sf T}}\univ_p\rangle > 0$,
the unique solution to the Dirichlet problem \eqref{E:monge-ampere} does not
belong to $\smoo^2(\overline{\Omega})$.
\end{theorem}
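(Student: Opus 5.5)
Argue by contradiction: suppose the unique solution $u$ of \eqref{E:monge-ampere} lies in $\smoo^2(\overline{\Omega})$. Set $v := u + \Psi$ on a neighbourhood of $\overline\Omega$ (intersected with $\overline\Omega$). Then $v\in\smoo^2(\overline\Omega)$, $v$ is plurisubharmonic in $\Omega$ as a sum of plurisubharmonic functions, and $v|_{\bdy\Omega}\equiv 0$. Since $f>0$, \eqref{E:det} gives $\det\cHess u = f/A_n>0$ in $\Omega$. Together with $\cHess u\geq 0$ this forces $\cHess u$ to be positive definite in $\Omega$, and by continuity $\cHess u\geq 0$ on $\overline\Omega$ with determinant $f(p)/A_n>0$ at $p$. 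So $\cHess u(p)$ is positive definite. In particular
\[
  \langle \univ_p, \cHess{v}(p)^{\sf T}\univ_p\rangle \;\geq\; \langle \univ_p, \cHess{\Psi}(p)^{\sf T}\univ_p\rangle \;>\; 0 .
\]
(The positivity of the $\Psi$-term is the hypothesis; the $u$-term only helps.)

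Next, extract information from $v$ vanishing on $\bdy\Omega$. Let $\rho$ be a $\smoo^2$ defining function for $\Omega$ near $p$. Because $v$ is $\smoo^2$ up to the boundary and vanishes on $\bdy\Omega$, write $v = h\rho$ near $p$ with $h$ continuous, or argue directly. The maximum principle gives $v\leq 0$ in $\Omega$, since $v$ is plurisubharmonic with zero boundary values. Hence the outward normal derivative of $v$ at $p$ is $\geq 0$, i.e.\ $\nabla v(p) = \lambda\nabla\rho(p)$ with $\lambda\geq 0$.

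The key computation is the restriction of the complex Hessian to complex tangent vectors. For a $\smoo^2$ function vanishing on $\bdy\Omega$, the standard identity says that for $w\in H_p(\bdy\Omega)$,
\[
  \langle w, \cHess{v}(p)^{\sf T} w\rangle = \lambda\,\langle w, \cHess{\rho}(p)^{\sf T} w\rangle .
\]
One sees this by differentiating along a holomorphic-tangent curve lying in $\bdy\Omega$ up to second order. Equivalently, compare $v-\lambda\rho$, which vanishes on $\bdy\Omega$ to first order at $p$ together with its gradient. Its complex Hessian in complex tangential directions is then computed by the real tangential second derivatives, and these vanish because $v-\lambda\rho\equiv0$ on $\bdy\Omega$ and $\nabla(v-\lambda\rho)(p)=0$. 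To be careful, write $\mathcal{L}_w$ as the average of the real second derivatives in directions $w$ and $iw$, both tangent to $\bdy\Omega$. For a function vanishing on a hypersurface with zero gradient at $p$, the second derivative along a tangent direction equals the second derivative along a curve in the hypersurface, which is zero. Applying this with $w=\univ_p$ and using that the Levi form vanishes on $\univ_p$ gives $\langle \univ_p, \cHess{v}(p)^{\sf T}\univ_p\rangle = 0$, contradicting the strict inequality above.

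The main obstacle is justifying this tangential-Hessian identity with only $\smoo^2$ regularity up to the boundary. I expect it to go through by choosing local coordinates in which $\bdy\Omega$ is a $\smoo^2$ graph over the tangent hyperplane and Taylor-expanding $v$ along the graph. One also needs a small argument to pass from interior positivity of $\cHess u$ to the boundary point. Only $\cHess u(p)\geq 0$ is really needed, and this follows by continuity, so the strict inequality comes from $\Psi$ alone. The $B$-regularity hypothesis enters only to guarantee that the unique solution exists, and the hypothesis $n\geq 2$ ensures that $H_p(\bdy\Omega)\neq 0$.
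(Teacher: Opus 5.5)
Your proof is correct, but it reaches the contradiction by a different mechanism from the paper's.

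\textbf{The paper's route.} After forming $\wt\Psi=\wt u+\Psi$ and noting $\langle \univ_p, \cHess{\wt\Psi}(p)^{\sf T}\univ_p\rangle>0$, the paper uses this strict positivity only to see that $\wt\Psi$ is non-constant. It then applies the maximum principle to get $\wt\Psi<0$ in $\Omega$. Next it applies the classical Hopf lemma, tacitly using the interior-ball condition that a $\smoo^2$ boundary provides, to get $\partial\wt\Psi/\partial\eta>0$ on all of $\bdy\Omega$. Hence $\wt\Psi$ is itself a defining function. The Levi form at $p$ in the direction $\univ_p$ is then read off as $\langle \univ_p, \cHess{\wt\Psi}(p)^{\sf T}\univ_p\rangle>0$, which contradicts the hypothesis.

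\textbf{Your route.} You prove directly that for any $\smoo^2$ function $v$ vanishing on $\bdy\Omega$, with $\nabla v(p)=\lambda\nabla\rho(p)$, the complex Hessian of $v$ restricted to $H_p(\bdy\Omega)$ equals $\lambda$ times the Levi form. Your curve argument for $v-\lambda\rho$ is sound, using that both $\univ_p$ and $i\univ_p$ lie in $T_p(\bdy\Omega)$. It needs only a $\smoo^2$ extension of $u$ past $\bdy\Omega$, which the paper also uses, so the chain rule along curves in $\bdy\Omega$ is legitimate. The obstacle you worried about is therefore not a real one.

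\textbf{Superfluous steps.} Because the identity holds for every real $\lambda$, including $\lambda=0$, your argument needs neither the Hopf lemma nor the maximum principle. The step yielding $\lambda\geq 0$ is never used. Also, the proportionality $\nabla v(p)=\lambda\nabla\rho(p)$ comes from $v|_{\bdy\Omega}\equiv 0$, not from $v\leq 0$, so your ``i.e.'' there conflates two separate facts. Likewise, the determinant argument for positive-definiteness of $\cHess{u}(p)$ is unnecessary, as you note yourself. The bound $\cHess{u}(p)\geq 0$, which follows from plurisubharmonicity and continuity, is enough, so $f>0$ plays no essential role.

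\textbf{Comparison.} The paper's route lets it finish by quoting the Levi form through a genuine defining function verbatim, at the cost of the global tools (maximum principle, Hopf lemma). Yours is purely local and more elementary. It isolates the one fact actually needed: the independence of the vanishing of the Levi form from the choice of defining function, extended to functions whose gradient may degenerate at $p$.
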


Here, $\langle\bcdot\,,\,\bcdot\rangle$ denotes the standard Hermitian inner product on $\Cn$.
\smallskip

One of the foci of this work is the following

\begin{question}\label{Q:C11_reg}
Let $\Omega$ be a $B$-regular domain in $\Cn$, $n\geq 2$, with $\smoo^\infty$-smooth boundary
that is weakly pseudoconvex. Given any $f\in \smoo^{1,1}(\overline{\Omega})$ with $f>0$ and
$\varphi\in \smoo^\infty(\bdy\Omega; \R)$, does the unique solution of the problem
\eqref{E:monge-ampere} belong to $\smoo^{1,1}(\overline{\Omega})$? How does this answer change
if $f\in \smoo^{\infty}(\overline{\Omega})$?
\end{question}

The above question is motivated by the potential applications of
\cite[Theorem~1.1]{caffarelliEtAl:DpnsoeeII85}. In view of Theorem~\ref{T:CMA_not-smooth}, it is
natural to seek solutions in $\smoo^{1,1}(\overline{\Omega})$. An affirmative answer to
Question~\ref{Q:C11_reg} would lead to results that are still interesting and for whose proofs 
no other approaches are currently known (but more on this later). Furthermore, if $\Omega$ and
$\varphi: \bdy\Omega\lrarw \R$ are exactly as in \cite[Theorem~1.1]{caffarelliEtAl:DpnsoeeII85}
but $f>0$ is merely of class $\smoo^{1,1}(\overline{\Omega})$, then 
\cite[Theorem~1.2]{caffarelliEtAl:DpnsoeeII85} shows that the unique solution to
\eqref{E:monge-ampere} belongs to $\smoo^{1,1}(\overline{\Omega})$. Its proof cannot be
replicated in weakly pseudoconvex settings. Thus, Question~\ref{Q:C11_reg} is also an
interesting question in its own right independent of its ramifications. This question is
settled as follows:

\begin{theorem}\label{T:CMA_not-C11}
Let $\Omega$ be a $B$-regular domain in $\Cn$, $n\geq 2$, with $\smoo^\infty$-smooth boundary that is
weakly pseudoconvex. Then, there exists a function $\varphi\in \smoo^\infty(\bdy\Omega; \R)$ such that,
for any
$f\in \smoo^{\infty}(\overline{\Omega})$ with $f>0$, the unique solution to the Dirichlet problem
\eqref{E:monge-ampere} is not in $\smoo^{1,1}(\overline\Omega)$.
\end{theorem}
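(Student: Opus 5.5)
We would prove Theorem~\ref{T:CMA_not-C11} by the same local mechanism as Theorem~\ref{T:CMA_not-smooth}, pushed from second derivatives in the classical sense down to second derivatives in $\leb{\infty}$. Since $\bdy\Omega$ is weakly pseudoconvex, we fix $p\in\bdy\Omega$ and a unit vector $\univ_p\in H_p(\bdy\Omega)$ on which the Levi form vanishes. We then choose $\varphi=-\Psi|_{\bdy\Omega}$, where $\Psi$ is a smooth, strictly plurisubharmonic function on a neighbourhood of $\overline\Omega$, for instance $\Psi(z)=|z|^2$. Then $\varphi\in\smoo^\infty(\bdy\Omega;\R)$ and $\langle\univ_p,\cHess{\Psi}(p)^{\sf T}\univ_p\rangle>0$, and this choice does not depend on $f$.

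Now let $f>0$ be smooth and suppose, for contradiction, that the solution $u$ lies in $\smoo^{1,1}(\overline\Omega)$. Set $v:=u+\Psi$. Then $v$ is plurisubharmonic, lies in $\smoo^{1,1}(\overline\Omega)$, and vanishes on $\bdy\Omega$. Let $\rho$ be a smooth defining function. Since $v|_{\bdy\Omega}=0$ and $v\in\smoo^{1,1}$, we may write $v=h\rho$ near $p$, with $h$ Lipschitz and $h=\bdy v/\bdy\nu$ on the boundary. By the Hopf lemma or the maximum principle, $v\le \sup_{\bdy\Omega} v = 0$, so $h\ge 0$ on $\bdy\Omega$ (with the convention $\rho<0$ inside).

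The heart of the argument is to restrict to the complex line, or to an analytic disc, through points $q$ near $p$ in the direction of $\univ$ tangent to $H_q$. Along such a line, $\Delta(u|_L)=\Delta(v|_L)-\Delta(\Psi|_L)$. Since $\Psi$ is strictly psh, $\Delta(\Psi|_L)\ge c>0$ uniformly. The quantity $\Delta(v|_L)$ is roughly $h\cdot(\text{Levi form of }\rho\text{ in }\univ)+{}$(terms involving $\nabla h\cdot\nabla\rho$ and $\rho\,\Delta h$). At boundary points the Levi form term vanishes exactly at $p$, and the mixed terms vanish since $\univ$ is complex tangential. Almost everywhere along interior points approaching $p$, the bounded second derivatives of $v$ (from $\smoo^{1,1}$) only control $\Delta(v|_L)$ up to $O(1)$. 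So the key is a finer estimate: plurisubharmonicity of $u$ forces $\Delta(v|_L)\ge \Delta(\Psi|_L)\ge c$ a.e. near $p$, whereas $v\le 0$ and $v=0$ on the boundary, together with weak pseudoconvexity at $p$, force $v$ restricted to a disc tangent to $\bdy\Omega$ at $p$ to be $O(|\zeta|^{2+\epsilon})$, or at least not to exhibit quadratic subharmonic growth.

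Concretely, we would take the disc $\zeta\mapsto p+\zeta\univ_p$. Since the Levi form vanishes on $\univ_p$, the defining function satisfies $\rho(p+\zeta\univ_p)=\mathrm{Re}(a\zeta^2)+o(|\zeta|^2)$. Pushing the disc slightly inward by a normal translation $-\delta\nu$ keeps it inside $\Omega$ on a radius $r(\delta)\gg\sqrt\delta$, which is where weak pseudoconvexity enters. On that disc $v\ge -C\delta$ by the $\smoo^{1,1}$ bound, since $v=O(\mathrm{dist})$ with Lipschitz $h$. Meanwhile, subharmonicity with Laplacian $\ge c$ gives $\sup_{\text{circle of radius }r} v\ge v(\text{center})+c r^2/4$. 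Comparing $cr(\delta)^2$ with the $O(\delta)$ bounds yields the contradiction.

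The main obstacle is securing this inward disc radius $r(\delta)\gg\sqrt\delta$ in general. When the Levi form vanishes only to second order, the $\mathrm{Re}(a\zeta^2)$ term may force us to replace the straight disc with a holomorphically corrected disc $\zeta\mapsto p+\zeta\univ_p+\zeta^2 w$ that kills the harmonic quadratic part of $\rho$. We would then need to show that pseudoconvexity ensures the remaining quadratic part is $o(|\zeta|^2)$ along this disc. This step, and making the $C\delta$ lower bound on $v$ uniform on the disc, is where we expect the real work to lie.
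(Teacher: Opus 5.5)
Your proposal is correct, and the step you single out as the main obstacle is immediate. Since $\bdy\rho(p)\neq 0$, pick $w\in\Cn$ with $\sum_j\rho_{z_j}(p)w_j=-\tfrac12\sum_{j,k}\rho_{z_jz_k}(p)(\univ_p)_j(\univ_p)_k$. Because $\univ_p\in H_p(\bdy\Omega)$, the Taylor expansion of $\rho$ along $\psi(\zeta):=p+\zeta\univ_p+\zeta^2w$ reduces to $\mathcal{L}_\rho(p)(\univ_p,\univ_p)|\zeta|^2+O(|\zeta|^3)=O(|\zeta|^3)$. What is used is the vanishing of the Levi form on $\univ_p$, not pseudoconvexity, and nothing further needs proving.

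Take $\delta=Kr^3$ with $K$ large and $r$ small. Then the closed disc $\{\psi(\zeta)-\delta\nu:|\zeta|\le r\}$ lies in $\Omega$. Since $\Delta_\zeta\|\psi(\zeta)-\delta\nu\|^2=4\|\psi'(\zeta)\|^2\ge 2$ near $0$ and $u$ is plurisubharmonic, the function $\zeta\mapsto v(\psi(\zeta)-\delta\nu)-\tfrac12|\zeta|^2$ is subharmonic. Together with $v\le 0$ this gives $-L\delta\le v(p-\delta\nu)\le -r^2/2$, where $L$ is a Lipschitz constant of $v$, and this fails for small $r$. The lower bound is needed only at the centre, where it is just Lipschitz continuity plus $v(p)=0$. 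So the factorisation $v=h\rho$ and the heuristic about $\Delta(v|_L)$ can be dropped, and, as in the paper, only $u\in\smoo^{0,1}(\overline\Omega)$ is used.

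Compared with the paper, you are unpacking the two results it cites. The paper forms $\phi=2\wt\Psi-\|z-\xi\|^2$, which is plurisubharmonic, Lipschitz, vanishes at $\xi$ and satisfies $\phi\le-\|z-\xi\|^2$. It applies Result~\ref{R:type_ineq} with $\beta=k=1$ to get $\sup_{\gvar}\tau(\xi,\gvar)\le 2$. Lemma~\ref{L:rough_equals_1-type} turns this into $\tau_1(\xi)\le 2$, contradicting weak pseudoconvexity at $\xi$.

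Your inward-translated disc with the quantitative sub-mean-value inequality is essentially the argument behind Result~\ref{R:type_ineq}. The strict subharmonicity of $\|z\|^2$ along the disc does the work of the bound $\phi\le-\|z-\xi\|^2$. Your corrected disc also makes explicit a fact the paper uses tacitly: a Levi-degenerate boundary point carries a variety with order of contact at least $3$, so $\tau_1(\xi)>2$.

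Your route is self-contained and needs neither Lemma~\ref{L:rough_equals_1-type} nor the type formalism. The paper's modular route yields Theorem~\ref{T:CMA_rel-to-type} with no extra work. Yours adapts as well: a disc with $\rho\circ\psi=O(|\zeta|^m)$ and $u\in\smoo^{0,\alpha}(\overline\Omega)$ give $r^2\lesssim r^{m\alpha}$, which excludes $\alpha>2/m$.
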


For the data $(\varphi, f)$ stipulated in Theorem~\ref{T:CMA_not-smooth}, let ${\sf u}_{\varphi,f}$
denote the solution to the problem \eqref{E:monge-ampere}. If ${\sf u}_{\varphi,f}$ belonged to
$\smoo^{1,1}(\overline{\Omega})$ then, by Rademacher's Theorem, and the fact that
$\cHess{{\sf u}_{\varphi,f}}(z)$ is Hermitian wherever it is defined, we would infer that all
second-order derivatives are in $\leb{\infty}(\overline{\Omega})$. From this, it might seem, given the
conclusion of Theorem~\ref{T:CMA_not-C11}, that one can rule out ${\sf u}_{\varphi,f} \in 
\smoo^{1,1}(\overline{\Omega})$ by purely classical considerations. We shall address this notion in
Remark~\ref{rem:C11_Rademacher}. The proof of Theorem~\ref{T:CMA_not-C11} requires a new idea. In fact,
the crux of the proof of Theorem~\ref{T:CMA_not-C11} is an argument that provides a more refined
conclusion.

\begin{theorem}\label{T:CMA_rel-to-type}
Let $\Omega$ be a $B$-regular domain in $\Cn$, $n\geq 2$, with $\smoo^\infty$-smooth boundary.
Suppose there exists a point $\xi\in \bdy\Omega$ whose D'Angelo $1$-type
$\tau_1(\xi) = 2k$, $k\geq 2$. Then,
  there exists a function $\varphi\in \smoo^\infty(\bdy\Omega; \R)$ such that, for
  any $f\in \smoo^\infty(\overline{\Omega})$ with $f>0$, the unique solution to the Dirichlet
  problem \eqref{E:monge-ampere} is not in $\smoo^{0,\alpha}(\overline\Omega)$
  for any $\alpha\in (1/k,1]$.  
\end{theorem}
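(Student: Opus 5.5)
Since $\tau_1(\xi)=2k$, there is a holomorphic disc $\gamma:\D\to\Cn$ with $\gamma(0)=\xi$, $\gamma'(0)\neq 0$ and, for a local defining function $\rho$ of $\OM$, $\rho\circ\gamma(\zt)=O(|\zt|^{2k})$, with order of contact exactly $2k$. After translating the disc by $t\nu$, where $\nu$ is the inward unit normal at $\xi$, we get discs $\gamma_t(\zt):=\gamma(\zt)+t\nu$ that lie in $\OM$ for $|\zt|\leq c\,t^{1/(2k)}$. The key point is that on a disc of radius $r_t\approx t^{1/(2k)}$, which is much larger than the scale $t$, the holomorphic direction is nearly tangent to $\bdy\OM$. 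The solution $u={\sf u}_{\varphi,f}$ is plurisubharmonic, so $u\circ\gamma_t$ is subharmonic on this disc, and the sub-mean value property there forces $u(\xi+t\nu)$ to lie well below the boundary data unless $\varphi$ is flat along the disc direction.

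To exploit this, I choose $\varphi$ so that it has a strict local maximum (or a definite concavity) along the curve $\zt\mapsto\gamma(\zt)$ at $\xi$. Concretely, I take $\varphi:=-|\langle z-\xi,\univ\rangle|^2$ near $\xi$, where $\univ=\gamma'(0)/|\gamma'(0)|$, cut off smoothly and extended to a $\smoo^\infty$ function on $\bdy\OM$. The choice mirrors the data of Theorem~\ref{T:CMA_not-smooth}, with $\Psi=|\langle z-\xi,\univ\rangle|^2$. The solution satisfies $u\leq h$, where $h$ is the harmonic (or any continuous maximal) majorant; $f>0$ only lowers $u$ further, so an upper bound on $u$ independent of $f$ is what we want. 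Then the sub-mean value property of $u\circ\gamma_t$ on $|\zt|=r_t$ gives
\[
  u(\xi+t\nu)\leq \frac{1}{2\pi}\int_0^{2\pi}u(\gamma_t(r_te^{i\theta}))\,d\theta .
\]
The points $\gamma_t(r_te^{i\theta})$ lie within distance $\lesssim t$ of $\bdy\OM$, and their nearest boundary points $q_\theta$ satisfy $\varphi(q_\theta)\approx -c'r_t^2=-c' t^{1/k}$.

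Now suppose $u\in\smoo^{0,\alpha}(\overline\OM)$. Then $u(\gamma_t(r_te^{i\theta}))\leq \varphi(q_\theta)+Ct^{\alpha}$, which yields
\[
  u(\xi+t\nu)\leq -c't^{1/k}+Ct^{\alpha}.
\]
Conversely, $u(\xi+t\nu)\geq \varphi(\xi)-Ct^\alpha = -Ct^\alpha$. Combining the two, $c't^{1/k}\leq 2Ct^{\alpha}$ for all small $t$. This is impossible when $\alpha>1/k$, since then $t^{\alpha}=o(t^{1/k})$.

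The main obstacle is the geometric step: I need the translated discs of radius $\asymp t^{1/(2k)}$ to stay inside $\overline\OM$ and their boundary circles to stay within distance $O(t)$ of $\bdy\OM$. This needs a careful Taylor expansion of $\rho\circ\gamma$ and uniform control of the contact. In particular, the lower-order terms must not spoil the estimate, which may require normalizing $\gamma$ as in D'Angelo's theory, so that $\rho(\gamma(\zt)+t\nu)=-t+O(|\zt|^{2k})$. The secondary issue is that the definition of $\tau_1$ guarantees only a maximal order of contact $2k$. That upper bound is exactly what makes the radius scale $t^{1/(2k)}$ optimal; for the argument above only the lower bound on the contact order is used.
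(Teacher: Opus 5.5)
Your core estimate is right. For a disc with $|\rho\circ\gamma(\zt)|\le C|\zt|^{2k}$, a first-order expansion in $t$ gives $\rho(\gamma(\zt)+t\nu)\le C|\zt|^{2k}-ct$ for small $|\zt|$ and $t$; no D'Angelo normalisation is needed. So $\gamma_t$ maps $\{|\zt|\le r_t\}$, with $r_t\asymp t^{1/(2k)}$, into $\OM$, and its rim lies within $O(t)$ of $\bdy\OM$. The sub-mean value property plus H\"{o}lder control at the rim and the centre then yields $c\,t^{1/k}\le C\,t^{\alpha}$.

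The gap is your first sentence, which is exactly where the hypothesis $\tau_1(\xi)=2k$ is used. The D'Angelo $1$-type is a supremum over \emph{all} germs of $1$-dimensional varieties, singular ones included. So $\tau_1(\xi)=2k$ does not produce a disc with $\gamma'(0)\neq 0$ and contact $2k$. For instance, take $0$ on $\{2\,\mathrm{Re}\,z_3+|z_1^2-z_2^3|^2+|z_2|^{2N}=0\}\subset\C^3$ with $N\geq 4$. There $\tau_1(0)=2N$, attained by $\zt\mapsto(\zt^3,\zt^2,0)$, whereas every smooth disc through $0$ has order of contact at most $6$. In that example your argument only rules out $\alpha>1/3$, not $\alpha\in(1/N,1/3]$. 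You also tacitly assume the supremum is attained, which the definition does not give.

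The repair stays within your framework. Fix $\alpha\in(1/k,1]$. Since $2/\alpha<2k$, some germ, with Puiseux parametrisation $\psi$ and $\multp(\psi)=m$, has order of contact $s=\multp(\rho\circ\psi)/m>2/\alpha$. Run your argument with $\psi_t=\psi+t\nu$ and $r_t\asymp t^{1/(sm)}$, using $\|\psi(\zt)-\xi\|\asymp|\zt|^m$ as in \eqref{E:compare}; then rim points lie at distance $\asymp t^{1/s}$ from $\xi$.

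You must also replace your datum by the direction-free $\varphi=-\|z-\xi\|^2$, which up to a pluriharmonic term is the paper's $-\|z\|^2$. Your $-|\langle z-\xi,\univ\rangle|^2$ is tied to one direction and can degenerate along germs with other leading directions. It would serve all $\alpha$ simultaneously only if a single germ of contact $\geq 2k$ exists. With the direction-free datum one gets $c\,t^{2/s}\le C\,t^{\alpha}$, contradicting $\alpha>2/s$.

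Once repaired, your proof is a genuinely different, self-contained route. The paper builds the peak function $\phi=2({\sf u}_{\varphi,f}+\|z\|^2)-\|z-\xi\|^2$ and quotes Forn{\ae}ss--Sibony (Result~\ref{R:type_ineq}) to get $\sup_{\gvar}\tau(\xi,\gvar)\le 2/\alpha$. It then identifies that supremum with $\tau_1(\xi)$ via Lemma~\ref{L:rough_equals_1-type}. Working with the supremum over all germs is what spares it the curve-selection issues above. Your disc argument is in effect a direct proof of the Forn{\ae}ss--Sibony inequality in this setting, and it makes the mechanism (disc scale versus H\"{o}lder error) transparent.
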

 
\begin{remark}
Under the hypothesis of Theorem~\ref{T:CMA_rel-to-type}, $\Omega$ is pseudoconvex; see, for instance,
\cite[Section~2]{sibony:cdp87}. It then follows from \cite[Section~5]{dangelo:rhoca82} that for a point
in $\bdy\Omega$ of finite D'Angelo $1$-type, this type is an even number. This is tacit in our
assumption on $\xi$ above.
\end{remark}

The negative conclusions of Theorems~\ref{T:CMA_not-smooth}, \ref{T:CMA_not-C11},
and~\ref{T:CMA_rel-to-type} prompt the following

\begin{question}\label{Q:appl}
Are any of the mapping problems alluded to above tractable in the absence of results assuring high-regularity
solutions to the problem \eqref{E:monge-ampere} with smooth data?
\end{question}

We shall add a couple of applications to this working draft that would answer Question~\ref{Q:appl}.

\section{Geometric preliminaries}\label{S:geom}
We begin with a list of notations for basic geometric objects that will appear frequently
in the sections that follow (some of which were used without comment in Section~\ref{S:intro}).
\begin{itemize}[leftmargin=25pt]
  \item[$(1)$] For $v\in \Cn$, $\|v\|$ will denote the Euclidean norm of $v$.
  \vspace{0.45mm}
  
  \item[$(2)$] Given a non-empty set $S\subseteq \Cn$ and $z\in \Cn$,
  \[
    {\rm dist}(z, S) := \inf\{\|z-x\|: x\in S\}.
  \]

  \item[$(3)$] Given a point $z \in \Cn$ and $r>0$, $\mathbb{B}^n(z,r)$ will denote the open Euclidean
  ball in $\Cn$ with radius $r$ and centre $z$. For simplicity, we will write 
  $\mathbb{B}^n := \mathbb{B}^n(0,1)$
  and $\D := \mathbb{B}^1(0,1)$.
  \vspace{0.45mm}

  \item[$(4)$] With $z$ and $r$ as above, $\mathbb{B}^n(z,r)^* := \mathbb{B}^n(z,r)\setminus \{z\}$. 
\end{itemize}
\smallskip

The remainder of this section is dedicated to a brief introduction to the D'Angelo $1$-type. In doing
so, we shall follow the discussion in \cite{dangelo:rhoca82}. Consider a $\smoo^\infty$-smooth
real hypersurface
$M\subset \Cn$, $n\geq 2$, and let $\xi\in M$. A \emph{germ of an analytic variety at $\xi$} refers to
a closed analytic subvariety of some neighbourhood $\mathscr{N}_{\xi}$ of $\xi$ (the word ``germ''
conveying that the neighbourhood $\mathscr{N}_{\xi}$ is not relevant to the discussion).
\smallskip

With $M$ and $\xi$ as above, consider a germ $\gvar$ of a $1$-dimensional variety at $M$. By the
Puiseux parametrisation, there exists a planar neighbourhood $U$ of $0\in \C$ and a holomorphic
map $\psi = (\psi_1,\dots ,\psi_n): (U,0)\lrarw (\Cn,\xi)$ such that $\gvar = {\sf image}(\psi)$.
Write
\[
  \multp(\psi) := \min\{{\rm mult}_0(\psi_1-\xi_1),\dots, {\rm mult}_0(\psi_n-\xi_n)\},
\]
where we write $\xi = (\xi_1,\dots, \xi_n)$ and ${\rm mult}_0(\psi_j-\xi_j)$ denotes the multiplicity
of the zero at $\zt=0$ of the function $(\psi_j-\xi_j)$, $j=1,\dots, n$. The quantity
$\multp(\rho\circ\psi)$ has an analogous meaning, where $\rho$ is a defining function for $M$ in a
neighbourhood of $\xi$. Namely:
\[
  \multp(\rho\circ\psi) := \inf\left\{\alpha+\beta:
  \partial_{\zt}^{\alpha}\partial^{\beta}_{\overline{\zt}}(\rho\circ\psi)(0)\neq 0\right\}.
\]
Thus, if all the Taylor coefficients at $\zt=0$ of $\rho\circ\psi$ vanish, then
$\multp(\rho\circ\psi) = \infty$.
Finally, the \emph{order of contact of $\gvar$ with $M$
at $\xi$} is defined as
\[
  \multp(\rho\circ\psi)/\multp(\psi).
\]
One can check\,---\,see \cite[Chapter~2]{dangelo:scvgrh93}, for instance\,---\,that the above ratio
does not depend on the choices of $(\psi, \rho)$.
\smallskip

Here, and in later sections, $\germ_{\xi}$ will denote the collection of all $1$-dimensional varieties
at $\xi$. We can now present the following definition.

\begin{definition}
Let $M$ be a $\smoo^\infty$-smooth real hypersurface in $\Cn$, $n\geq 2$, and let $\xi\in M$.
The \emph{D'Angelo $1$-type of $M$ at $\xi$}, denoted by $\tau_1(M,\xi)$ (and written simply as
$\tau_1(\xi)$ if the hypersurface in question is unambiguous), is defined as
\[
  \tau_1(\xi) := \sup\nolimits_{\gvar\in \germ_{\xi}}\frac{\multp(\rho\circ\psi_{\gvar})}{\multp(\psi_{\gvar})},
\]
where $\psi_{\gvar}: (U_{\gvar},0)\lrarw (\Cn,0)$ is a holomorphic parametrisation of $\gvar$ (mapping
$0$ to $\xi$).   
\end{definition}

\section{Essential analytical results}\label{S:ess}
We begin with a discussion on $B$-regular domains and its significance that was hinted at
in Section~\ref{S:intro}.

\begin{definition}\label{D:b-regular}
Let $\Omega\varsubsetneq\Cn$ be a bounded domain. We say that $\Omega$ is \emph{$B$-regular} if $\bdy\Omega$
is a \emph{$B$-regular set}: i.e., if each function $\varphi\in\smoo(\bdy\Omega;\R)$ is the uniform
limit on $\bdy\Omega$ of a sequence
$(u_{\nu})_{\nu\geq 1}$ of continuous plurisubharmonic functions defined
on open neighbourhoods of $\bdy\Omega$ (each such neighbourhood 
depending on the function $u_{\nu}$).
\end{definition}

The above definition is taken from \cite{sibony:cdp87}. The definition of a $B$-regular domain
in some later papers seems different from that in
Definition~\ref{D:b-regular}; however, \cite[Theorem~2.1]{sibony:cdp87} establishes 
the equivalence between them (see also \cite[Theorem~1.7]{blocki:cmaohd96}).
\smallskip

As mentioned in Section~\ref{S:intro}, the Dirichlet problem~\eqref{E:monge-ampere} admits a unique
solution for any non-negative function $f\in\smoo(\overline\Omega)$ and any boundary data
$\varphi\in\smoo(\bdy\Omega;\R)$. This was established by B{\l}ocki; see \cite[Theorem~4.1]{blocki:cmaohd96}. When, for the solution $u$, $u|_\Omega$ is not of class $\smoo^2(\Omega)$,
the left-hand side of
\eqref{E:monge-ampere} must be interpreted as a current of bidegree $(n,n)$. This interpretation is well defined
for $u\in\smoo(\Omega)\cap{\sf psh}(\Omega)$, as established by Bedford--Taylor
\cite{bedfordtaylor:dpcmae76}, who proved an
existence and uniqueness theorem for the Dirichlet problem~\eqref{E:monge-ampere} with the above-mentioned
data for strongly
pseudoconvex domains.
As for \cite[Theorem~4.1]{blocki:cmaohd96}: it is a consequence of Theorem~8.3
in \cite{bedfordtaylor:dpcmae76}.
\smallskip

Next, we consider a notion that, in the specific form below, was introduced\,---\,to the best of our
knowledge\,---\,in \cite{fornaesssibony:cpshwpd89}. In \cite[Section~3]{fornaesssibony:cpshwpd89}, this notion,
which the authors call the ``type,'' is examined for domains with smooth boundary; in this setting, equivalent
expressions have appeared in the literature in the early 1980s. Let $\Omega$ be a domain in $\Cn$,
$n\geq 2$, and let $\xi\in \bdy\Omega$. For any germ $\gvar$ of a $1$-dimensional analytic variety at
$\xi$, define
\[
  \tau(\xi,\gvar) := \sup\left\{s>0 : \limsup_{\gvar\setminus \{\xi\}\,\ni\,z\to \xi}
                            \frac{{\rm dist}(z, \bdy\Omega)}{\|z-\xi\|^s} < \infty\right\}.
\]
This notion is relevant to the proof of Theorem~\ref{T:CMA_rel-to-type}. The key result that we shall
need is the following. In what follows, we shall use the notation introduced in Section~\ref{S:geom}.

\begin{result}[Forn{\ae}ss--Sibony, {\cite[Proposition~3.1]{fornaesssibony:cpshwpd89}}]\label{R:type_ineq}
Let $\Omega\varsubsetneq \Cn$, $n\geq 2$, be a bounded domain with $\smoo^\infty$-smooth boundary and let 
$\xi\in \bdy\Omega$. Assume there exists a function $\phi\in \smoo(\overline{\Omega})\cap {\sf psh}(\Omega)$
such that, for constants $\beta\in (0,1]$ and $C>0$,
\begin{itemize}
  \item[$(a)$] $|\phi(z')-\phi(z)|\leq C\|z-z'\|^\beta$ for all $z, z'\in \Omega$,
  \item[$(b)$] $\phi(0)=0$ and $\phi(z)\leq -\|z-\xi\|^{2k\beta}$ for all $z\in \overline{\Omega}$.
\end{itemize}
Then, $\sup_{\gvar\in \germ_{\xi}}\tau(\xi, \gvar)\leq 2k$.
\end{result}

Observe that $\tau(\xi,\gvar)$ makes sense regardless of whether or not $\bdy\Omega$ is smooth. It
turns out that when $\bdy\Omega$ is smooth,
$\sup_{\gvar\in \germ_{\xi}}\tau(\xi, \gvar)$ equals the D'Angelo $1$-type of $\bdy\Omega$ at $\xi$. We could
not find a proof of this fact in the literature. As it is essential to this work, we provide a proof
here.

\begin{lemma}\label{L:rough_equals_1-type}
Let $\Omega\varsubsetneq \Cn$, $n\geq 2$, be a domain with $\smoo^\infty$-smooth boundary and let 
$\xi\in \bdy\Omega$. 
For any germ $\gvar$ of a $1$-dimensional analytic variety at $\xi$,
\[
  \tau(\xi, \gvar) = \text{the order of contact of $\gvar$ with $\bdy\Omega$ at $\xi$}.
\]  
Therefore, $\sup_{\gvar\in \germ_{\xi}}\tau(\xi, \gvar) = \tau_1(\xi)$.
\end{lemma}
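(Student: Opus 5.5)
The plan is to compare ${\rm dist}(z,\bdy\Omega)$ with $|\rho(z)|$ for a local defining function $\rho$, and $\|z-\xi\|$ with a power of $|\zt|$ along a Puiseux parametrisation.

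Fix a $\smoo^\infty$ defining function $\rho$ for $\bdy\Omega$ near $\xi$ with $d\rho\neq 0$. Standard tubular-neighbourhood considerations give constants $c_1,c_2>0$ and a neighbourhood $W$ of $\xi$ such that
\[
  c_1|\rho(z)| \leq {\rm dist}(z,\bdy\Omega) \leq c_2|\rho(z)| \quad \text{for all } z\in W.
\]
(Only points of $\gvar\cap W$ matter, and we may shrink $W$.) Let $\psi:(U,0)\lrarw(\Cn,\xi)$ parametrise $\gvar$ and write $m:=\multp(\psi)$. We may take $\psi$ injective near $0$, i.e.\ a normalisation; for an irreducible germ this is possible, and otherwise we treat each branch and take the minimum. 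Then Taylor expansion gives
\[
  c_3|\zt|^m \leq \|\psi(\zt)-\xi\| \leq c_4|\zt|^m
\]
for small $\zt$, and $\zt\to 0$ if and only if $\psi(\zt)\to\xi$ on $\gvar\setminus\{\xi\}$.

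Next, set $N:=\multp(\rho\circ\psi)$. If $N<\infty$, the Taylor expansion of $\rho\circ\psi$ at $0$ has its lowest-order part a nonzero real homogeneous polynomial $P_N(\zt,\overline\zt)$ of degree $N$, so $|\rho\circ\psi(\zt)|\leq C|\zt|^N$. Combining the three comparisons,
\[
  \frac{{\rm dist}(\psi(\zt),\bdy\Omega)}{\|\psi(\zt)-\xi\|^s} \asymp \frac{|\rho\circ\psi(\zt)|}{|\zt|^{ms}} .
\]
For $s\leq N/m$ this is bounded, which gives $\tau(\xi,\gvar)\geq N/m$. For the reverse inequality, the step I expect to be the main obstacle, pick a direction $e^{i\theta_0}$ with $P_N(e^{i\theta_0},e^{-i\theta_0})\neq 0$; such a direction exists because $P_N\not\equiv 0$ and a homogeneous polynomial is determined by its restriction to the circle. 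Along the ray $\zt=te^{i\theta_0}$ we get $|\rho\circ\psi|\geq c\,t^N$ for small $t>0$. Hence for $s>N/m$ the ratio blows up along this ray, so the $\limsup$ is infinite and $\tau(\xi,\gvar)\leq N/m$. One should also note that the ratio is nonnegative and that points with $\rho(\psi(\zt))>0$, lying outside $\Omega$, are still handled by the distance estimate.

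If $N=\infty$, then $|\rho\circ\psi|=O(|\zt|^{M})$ for every $M$, so the ratio is bounded for every $s$ and $\tau(\xi,\gvar)=\infty$, which agrees with the order of contact being $\infty$. This proves $\tau(\xi,\gvar)=N/m$, the order of contact. Taking the supremum over $\gvar\in\germ_\xi$ and using the definition of $\tau_1(\xi)$ yields the final claim; the independence of the choice of $(\psi,\rho)$ noted in Section~\ref{S:geom} ensures consistency. For reducible germs, $\tau(\xi,\gvar)$ is the minimum over branches, and the order of contact computed from a single parametrisation behaves the same way, or else we restrict to irreducible germs, which suffice for the supremum.
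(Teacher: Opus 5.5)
Your proposal is correct and follows essentially the same route as the paper. Both arguments use the two-sided comparisons ${\rm dist}(z,\bdy\Omega)\asymp|\rho(z)|$ and $\|\psi(\zt)-\xi\|\asymp|\zt|^{\multexp(\psi)}$, get $\tau(\xi,\gvar)\geq t$ from the Taylor bound $|\rho\circ\psi(\zt)|\leq C|\zt|^{\multexp(\rho\circ\psi)}$, and get $\tau(\xi,\gvar)\leq t$ from a matching lower bound along points tending to $0$; your choice of a ray on which the leading homogeneous part $P_N$ is nonzero in fact justifies the paper's unproved assertion that points $\zt_\eps$ with $|\rho\circ\psi(\zt_\eps)|\geq c|\zt_\eps|^{\multexp(\rho\circ\psi)}$ exist, and your explicit treatment of the case $N=\infty$ and of reducible germs is a welcome addition.
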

\begin{proof}
Fix $\xi\in \bdy\Omega$ and $\gvar\in \germ_{\xi}$. 
We may assume $\xi=0$.
There exists $r_1>0$ such that $\gvar$ is a closed analytic subvariety of $\mathbb{B}^n(0,r_1)$.
It is a classical fact that, shrinking $r_1>0$ if needed, there exist a planar neighbourhood $U$ of $0\in \C$
with ${\rm diam}(U)<1$ and
$\psi = (\psi_1,\dots, \psi_n)\in {\rm Hol}(U; \Cn)$ such that $\psi(0)=0$ and $\gvar = {\sf image}(\psi)$.
Fix a defining function
$\rho$ for $\Omega$. Then:
\[
  \text{the order of contact of $\gvar$ with $\bdy\Omega$ at $0$} :=
  \frac{\multp(\rho\circ \psi)}{\multp(\psi)}.  
\]
Recall, from the discussion in Section~\ref{S:geom},
that the right-hand side (call it
$t$) above does not depend on the choices of $(\psi, \rho)$. 
Since $\gvar$ is $1$-dimensional and since the zeros of \emph{at least} one $\psi_j$, $1\leq j\leq n$, are
isolated points in $U$, there exist 
constants $B>1$ and $r_2>0$ such that
\begin{equation}\label{E:compare}
  B^{-1}|\zt|^{\multexp(\psi)}\leq \|\psi(\zt)\|\leq  B|\zt|^{\multexp(\psi)}
  \quad \forall \zt: |\zt| < r_2. 
\end{equation}

Let $\widehat{\Omega}:=\bigcup_{z\in \overline{\Omega}}(z+\mathbb{B}^n)$.
There exists a constant $\kappa>1$ such that
\begin{equation}\label{E:dist}
  \kappa^{-1}|\rho(z)|\leq {\rm dist}(z, \bdy\OM)\leq \kappa|\rho(z)| \quad \forall z\in \widehat{\Omega}. 
\end{equation}
Let $\sigma>0$ be such that $0 < t-2\sigma < t$. There exists a constant $C>0$ such that
for all $\eps\in (0,r_1)$,
\begin{align}
  |\rho\circ \psi(\zt)| &< C|\zt|^{\multexp(\rho\circ\psi)} \notag \\
  &= C|\zt|^{t\multexp(\psi)} < C|\zt|^{(t-\sigma)\multexp(\psi)}
  \quad \forall \zt\in \psi^{-1}(\mathbb{B}^n(0,\eps)^*). \label{E:small}
\end{align}
The inequality in $\eqref{E:small}$ is due to the fact that ${\rm diam}(U)<1$.
Combining the last estimate with \eqref{E:compare} and \eqref{E:dist}, we get
\[
  \frac{{\rm dist}(\psi(\zt), \bdy\Omega)}{\|\psi(\zt)\|^{t-\sigma}} < \kappa B^{t-\sigma}C
  \quad \forall \zt \in \psi^{-1}(\mathbb{B}^n(0,\eps)^*),
\]
which holds for any $\eps>0$ sufficiently small. 
This implies that
\[
  t-\sigma \in \left\{s>0 : \limsup_{\gvar\setminus \{0\}\,\ni\,z\to 0}
                            \frac{{\rm dist}(z, \bdy\Omega)}{\|z\|^s} < \infty\right\} =: \mathcal{S}(\gvar).
\]

From the last assertion, if follows that for each $\sigma>0$ considered in the last paragraph,
$t-2\sigma$ is not an upper bound of $\mathcal{S}(\gvar)$. We now argue that $t$ is an upper bound of
$\mathcal{S}(\gvar)$. If $t = +\infty$, then we are done. Therefore, assume that $t$ is finite.
Assume, if possible, that $t$ is not an upper bound of $\mathcal{S}(\gvar)$. Then, there exists an
$s\in \mathcal{S}(\gvar)$ such that $s>t$. As $t$ is finite, $\multp(\rho\circ \psi)$ is finite.
So, for each $\eps\in (0,r_1)$, by definition, 
$(\gvar\setminus \bdy\Omega)\cap \mathbb{B}^n(0,\eps) \neq \emptyset$.
By the definition of $\multp(\rho\circ\psi)$,
there exists a constant $c>0$ and, for each $\eps\in (0,r_1)$, a point $\zt_{\eps}\in 
\psi^{-1}(\mathbb{B}^n(0,\eps)^*)$ such that
\[
  |\rho\circ \psi(\zt_{\eps})| \geq c|\zt_{\eps}|^{\multexp(\rho\circ \psi)}.
\]
Combining this with \eqref{E:compare} and \eqref{E:dist}, we get
\[
  \frac{{\rm dist}(\psi(\zt_{\eps}), \bdy\Omega)}{\|\psi(\zt_{\eps})\|^s} 
  \geq \kappa^{-1}B^{-s}\frac{|\rho\circ \psi(\zt_{\eps})|}{|\zt_{\eps}|^{s\multexp(\psi)}}
  > c\kappa^{-1}B^{-s}|\zt_{\eps}|^{-(s-t)\multexp(\psi)}
\]
for all $\eps>0$ sufficiently small. But since $|\zt_{\eps}|\searrow 0$ as $\eps\searrow 0$,
the above contradicts the fact that $s\in \mathcal{S}(\gvar)$. Hence, $t$ is an upper bound
of $\mathcal{S}(\gvar)$. It follows from the assertions in this paragraph that
$t = \sup\mathcal{S}(\gvar)$. Now, by the definition of the D'Angelo $1$-type,
$\sup_{\gvar\in \germ_{\xi}}\tau(\xi, \gvar) = \tau_1(\xi)$.
\end{proof}

\section{The proof of Theorem~\ref{T:CMA_not-smooth}}
Let $V\supset\overline\Omega$ be a domain
such that $\Psi\in\smoo^2(V)$ and $\Psi$ is plurisubharmonic on $V$.
If possible, let the unique solution to the Dirichlet problem \eqref{E:monge-ampere}, with
the data $(\varphi,f)$ as prescribed, belong to
$\smoo^2(\overline{\Omega})$. Then  
there exists an open set $V^*\supset\overline\Omega$ and an extension $\wt u\in \smoo^2(V^*)$
of this solution. Since $f>0$ on $\overline{\Omega}$, it follows that
\begin{equation}\label{E:wtu-strict}
  \langle v, \cHess{\wt u}(z)^{\!{\sf T}}v\rangle \geq 0\quad\forall z\in \overline{\Omega}, \; \ \forall v\in\C^n.
\end{equation}
Let $W:=V\cap V^*$.
Define the function $\wt\Psi:W\lrarw\R$ by $\wt\Psi:=\wt u+\Psi$. Clearly, $\wt\Psi\in\smoo^2(W)\cap {\sf psh}(\Omega)$ and 
$\wt\Psi\equiv0$ on $\bdy\Omega$.  
As $\langle \univ_p, \cHess{\Psi}(p)^{\!{\sf T}}\univ_p\rangle > 0$, in view of \eqref{E:wtu-strict} we have
\begin{equation}\label{E:levi-form-strict}
  \langle \univ_p, \cHess{\wt\Psi}(p)^{\!{\sf T}}\univ_p\rangle > 0.
\end{equation}
So, $\wt\Psi$ is non-constant on $\overline\Omega$. Therefore, as $\wt\Psi\in\sf{psh}(\Omega)$ and
$\wt\Psi|_{\bdy\Omega}=0$, 
by the maximum principle it follows that $\wt\Psi<0$ on $\Omega$.
Let $\xi\in\bdy\Omega$. Since $\wt\Psi(\xi)=0=\sup_{\Omega}\wt\Psi$, by the classical Hopf Lemma we have
$\partial\wt\Psi(\xi)/\partial\eta>0$, where $\eta$ is the outward normal vector field of $\bdy\Omega$. 
Recall that
\[
  \partial\wt\Psi(\xi)/\partial\eta = \big(\,\nabla\wt\Psi(\xi)\!\mid\! \eta(\xi)\,\big)
\]
for every $\xi\in \bdy\Omega$, where $(\bcdot\,|\,\bcdot)$ denotes the standard real inner product
on $\R^{2n}$. Therefore,
$\nabla\wt\Psi(\xi)\neq 0$ for every $\xi\in\bdy\Omega$. Now, as $\wt\Psi < 0$ on $\Omega$,
$\wt\Psi$ is a defining function for $\Omega$.
Therefore, by \eqref{E:levi-form-strict}, the Levi-form of $\bdy\Omega$ at $p$ on ${\rm span}_{\C}\{\univ_p\}\setminus \{0\}$
is strictly positive, which contradicts our hypothesis.
Hence, the result follows. \hfill $\qed$
\medskip

\section{The proofs of Theorems~\ref{T:CMA_not-C11} and~\ref{T:CMA_rel-to-type}}
Before we give the proof of Theorem~\ref{T:CMA_not-C11}, let us elaborate upon the observation in
Section~\ref{S:intro} that, with $(\varphi, f)$ as in Theorem~\ref{T:CMA_not-smooth}, if
${\sf u}_{\varphi,f}$ denotes the unique solution of the Dirichlet problem \eqref{E:monge-ampere}, then
the conclusion ${\sf u}_{\varphi,f} \in \smoo^{1,1}(\overline{\Omega})$ can be ruled out in view of
Theorem~\ref{E:monge-ampere} and Rademacher's Theorem.

\begin{remark}\label{rem:C11_Rademacher}
Let $\Omega$ be as in Theorem~\ref{T:CMA_not-smooth}
and \textbf{fix} data $(\varphi, f)$ as in that
theorem. Let ${\sf u}_{\varphi,f}$ be as defined above and assume that
${\sf u}_{\varphi,f} \in \smoo^{1,1}(\overline{\Omega})$. Then, by Rademacher's Theorem, there
exists a set $\excep\varsubsetneq \overline{\Omega}$ such that $|\excep| = 0$ (here, $|\bcdot|$
denotes the Lebesgue measure on $\R^{2n}$) and such that ${\sf u}_{\varphi,f}$ is twice-differentiable
on $\overline{\Omega}\setminus\excep$. By Theorem~\ref{T:CMA_not-smooth}, $\excep\neq \emptyset$.
Rademacher's Theorem gives us no information on the structure of the set $\excep$. Thus, defining
\[
  w(\Omega) := \{\xi\in \bdy\Omega : \ \text{$\bdy\Omega$ is weakly Levi-pseudoconvex at $\xi$}\},
\]
there is no reason to assume (even if we apply a version of Rademacher's Theorem to the manifold
$\bdy\Omega$) that $w(\Omega)\setminus\excep\neq \emptyset$. If $w(\Omega)\subset \excep$, we
leave it to the reader to check that the part of our proof of Theorem~\ref{T:CMA_not-smooth}
leading to a contradiction is uninformative if we begin with the assumption
${\sf u}_{\varphi,f}\in \smoo^{1,1}(\overline{\Omega})$. Now, \emph{still assuming}
$w(\Omega)\subset \excep$, we could attempt a more ``basic'' argument. A 
\emph{holomorphic directional derivative} will refer to the differential operator in
$T^{1,0}_{z}\Cn$, for some $z\in \Cn$, canonically associated with a vector $v\in \Cn$ with
$\|v\|=1$. For each $z\in \Omega\setminus \excep$, fix a direction $v(z)$ in the eigenspace
associated with the least eigenvalue of $\cHess{\wt\Psi}(z)$, and set
\[
 \dirder_z := \text{the holomorphic directional derivative at $z$ associated with $v(z)$.}
\]
Since $f>0$, $\det(\cHess{{\sf u}_{\varphi,f}})\in \leb{\infty}(\overline{\Omega})$, and
$\cHess{{\sf u}_{\varphi,f}}(z)$ is Hermitian at each $z\in \overline{\Omega}\setminus\excep$, 
it seems one would be led to
the desired contradiction if one could answer in the affirmative the question (with $p\in \bdy\Omega$
as in Theorem~\ref{T:CMA_not-smooth}):
\emph{Does there exist a sequence $\{z_{\nu}\}\subset \Omega\setminus \excep$ converging to $p$ such
that $(a)$~${\rm span}_{\C}\{\,v(z)\,\}\lrarw {\rm span}_{\C}\{\,\univ_p\,\}$ in $\mathbb{P}^{n-1}$,
and $(b)$~$\lim_{\nu\to \infty}|\overline{\dirder}_{z_{\nu}}\dirder_{z_{\nu}} {\sf u}_{\varphi,f}|=0$?}
Rademacher's Theorem is not sufficiently informative to answer $(a)$. As for $(b)$: in general
$\overline{\dirder}_{z}\dirder_{z}{\sf u}_{\varphi,f}$ is merely \emph{measurable}, whence it is
unclear why the stated limit must exist (let alone equal $0$).
\end{remark}

As alluded to in Section~\ref{S:intro}, the crux of the proof of Theorem~\ref{T:CMA_not-C11} is the argument
for Theorem~\ref{T:CMA_rel-to-type}. Thus, we shall begin with

\begin{proof}[The proof of Theorem~\ref{T:CMA_rel-to-type}]
\textbf{Fix} an $f\in \smoo^\infty(\overline\Omega)$ with $f>0$. Write $\Psi(z) := \|z\|^2$, $z\in \Cn$.
Take $\varphi: \bdy\Omega\lrarw \R$ as $\varphi := -\Psi|_{\bdy\Omega}$; clearly,
$\varphi\in \smoo^\infty(\bdy\Omega; \R)$. Let ${\sf u}_{\varphi,f}$ denote the unique solution 
in $\smoo(\overline{\Omega})$ of the Dirichlet problem \eqref{E:monge-ampere} with the above $(\varphi,f)$. If
\[
  {\sf u}_{\varphi,f}\notin \bigcup_{\alpha\in (0,1]}\smoo^{0,\alpha}(\bdy\Omega; \R),
\]
then we are done. Therefore, suppose ${\sf u}_{\varphi,f}\in \smoo^{0,\alpha}(\bdy\Omega; \R)$ for some
$\alpha\in (0,1]$. Now, define $\wt{\Psi} := {\sf u}_{\varphi,f}+\Psi|_{\overline{\Omega}}$.
As $\wt{\Psi}\in \smoo(\overline{\Omega})\cap {\sf psh}(\Omega)$ and $\wt{\Psi}|_{\bdy\Omega}\equiv 0$,
$\wt{\Psi}\leq 0$ by the maximum principle.
\smallskip

Define $\phi(z) := 2\wt{\Psi}(z)-\|z-\xi\|^2$ for $z\in \overline{\Omega}$.
A computation of $dd^{c}\phi$ in the sense of currents reveals that $\phi\in 
\smoo(\overline{\Omega})\cap {\sf psh}(\Omega)$. By construction, $\phi(\xi)=0$. As $\wt{\Psi}\leq 0$, we
have
\begin{equation}\label{E:upper_est}
  \phi(z) \leq -\|z-\xi\|^2 \quad \forall z\in \overline{\Omega}.
\end{equation}
As ${\sf u}_{\varphi,f}\in \smoo^{0,\alpha}(\overline{\Omega})$, so is $\phi$. Thus,
there exists a constant $C>0$ such that
\begin{equation}\label{E:diff_est}
  |\phi(z')-\phi(z)| \leq C\|z'-z\|^\alpha \quad \forall z, z'\in \overline{\Omega}.
\end{equation}
Assuming that $\alpha > 1/k$, it would follow from Result~\ref{R:type_ineq} that
$\sup_{\gvar\in \germ_{\xi}}\tau(\xi, \gvar)\leq 2/\alpha < 2k$. But this, due to
Lemma~\ref{L:rough_equals_1-type}, would imply that
\[
  \tau_1(\xi) = \sup_{\gvar\in \germ_{\xi}}\tau(\xi, \gvar) < 2k,
\]
which contradicts our hypothesis. Thus, $\alpha\notin (1/k, 1]$; hence the result. 
\end{proof}

We now give the

\begin{proof}[The proof of Theorem~\ref{T:CMA_not-C11}]
\textbf{Fix} an $f\in \smoo^\infty(\overline\Omega)$ with $f>0$. Let $\Psi$, $\varphi$, and
${\sf u}_{\varphi,f}$ be \textbf{exactly} as in the above proof. If possible, let
${\sf u}_{\varphi,f} \in \smoo^{1,1}(\overline{\Omega})$. Then, clearly,
${\sf u}_{\varphi,f}\in \smoo^{0,1}(\overline{\Omega})$.
\smallskip

By hypothesis, $\Omega$ is weakly pseudoconvex; pick a point $\xi\in \bdy\Omega$ at which
$\bdy\Omega$ is weakly Levi-pseudoconvex. Define $\phi(z) := 2\wt{\Psi}(z)-\|z-\xi\|^2$ for $z\in
\overline{\Omega}$. Since (by  assumption)
${\sf u}_{\varphi,f}\in \smoo^{0,1}(\overline{\Omega})$, by exactly the same argument as in the above
proof, we deduce that
\begin{align*}
  |\phi(z')-\phi(z)| &\leq C\|z'-z\| \quad \forall z, z'\in \overline{\Omega}, \\
  \phi(z) &\leq -\|z-\xi\|^2 \quad \forall z\in \overline{\Omega},
\end{align*}
for some $C>0$. Then, by Result~\ref{R:type_ineq} and Lemma~\ref{L:rough_equals_1-type}, we
conclude that the D'Angelo $1$-type $\tau_1(\xi) = 2$, which produces a contradiction. Thus
our assumption above is false, and the result follows.
\end{proof}
\smallskip

\section*{Acknowledgements}
G. Bharali is supported by a DST-FIST grant (grant no.~DST FIST-2021
[TPN-700661]). R. Masanta is supported by the Theoretical Statistics and Mathematics Unit at the Indian
Statistical Institute, Bangalore Centre.

\end{document}